
\documentclass[journal,twoside,print]{ieeecolor}

\usepackage{generic}
\usepackage{graphicx}



\usepackage{amsfonts}
\usepackage{amsmath}
\usepackage{amssymb}
\usepackage{srcltx}
\usepackage{color}
\usepackage{epsfig}
\usepackage{url}

\newcommand{\RR}{{\mathbb R}}
\newcommand{\NN}{{\mathbb N}}

\newcommand{\cA}{{\mathcal{A}}}

\newcommand{\cE}{{\mathcal{E}}}

\newcommand{\cV}{{\mathcal{V}}}

\newcommand{\cB}{{\mathcal{B}}}

\newcommand{\bx}{{\boldsymbol{x}}}
\newcommand{\by}{{\boldsymbol{y}}}
\newcommand{\bz}{{\boldsymbol{z}}}

\newcommand{\bp}{{\boldsymbol{p}}}

\newcommand{\ba}{{\boldsymbol{a}}}

\newcommand{\bss}{{\boldsymbol{s}}}
\newcommand{\nill}{{\boldsymbol{0}}}
\newcommand{\bzero}{{\boldsymbol{0}}}
\newcommand{\vardot}{\mathord{\,\cdot\,}}

\newcommand{\bigO}{\mathcal O}
\newcommand{\co}{\operatorname{co}}

\newcommand\Tstrut{\rule{0pt}{2.6ex}}         
\newcommand\Bstrut{\rule[-0.9ex]{0pt}{0pt}}   

\newtheorem{theorem}{Theorem}
\newtheorem{lemma}[theorem]{Lemma}
\newtheorem{corollary}[theorem]{Corollary}
\newtheorem{remark}[theorem]{Remark}
\newtheorem{example}[theorem]{Example}
\newtheorem{definition}[theorem]{Definition}
\newtheorem{algorithm}[theorem]{Algorithm}

\usepackage[dvipsnames]{xcolor}
\usepackage{tikz}
\usepackage{tkz-euclide}
\newcommand{\norm}[1]{\left|\left|#1\right|\right|}
\newcommand{\abs}[1]{\left|#1\right|}
\usepackage{rotating}
\newcommand{\m}{\phantom{m}}
\usepackage{multicol}



\usepackage{hyperref}
\hypersetup{hidelinks}

\begin{document}

\title{Stability under dwell time constraints:\\Discretization revisited }

\author{%
Thomas Mejstrik, and 
Vladimir Yu. Protasov
%
\thanks{%
Submitted: 22. January 2024.
This work was supported by the Austrian Science Foundation (FWF)
under Grant P33352-N.
}
\thanks{%
Thomas Mejstrik, University of Vienna;
{\tt\small thomas.mejstrik@gmx.at}
}
\thanks{
Vladimir Yu. Protasov, University of L'Aquila, Italy; 
{\tt\small vladimir.protasov@univaq.it}
}
}

\maketitle

\begin{abstract}
We decide the stability and compute  the Lyapunov exponent 
of continuous-time linear  switching systems with a guaranteed dwell time.  
The main result asserts that the discretization method with step size~$h$ approximates 
the Lyapunov exponent with the precision~$C\,h^2$, where~$C$ is a constant. 
Let us stress that without the dwell time assumption, the approximation rate is known to be linear in~$h$. Moreover, for every system, the constant~$C$ can be explicitly evaluated. In turn, the discretized system can be treated by computing the Markovian joint spectral radius of a certain system on a graph. This gives the value of the Lyapunov exponent 
with a high accuracy. The method is efficient for dimensions up to, approximately, ten;
for positive systems, the dimensions can be much higher, up to several hundreds. 
\end{abstract}

\begin{IEEEkeywords}
discretization,
dynamical system on graphs,
extremal norm,
joint spectral radius, 
linear switching system, stability, 
Lyapunov exponent,
multinorm,
49M25, 93C30, 37C20, 15A60
\end{IEEEkeywords}

\section{Introduction}

We consider a linear switching system of the form
\begin{equation}\label{eq.main}
\left\{
\begin{array}{l}
\dot \bx (t)  =  A(t)\bx(t), \quad t \in [0, +\infty), \quad A(t)  \in  \cA\\
\bx(0)  = 0
\end{array}
\right. 
\end{equation}
with a positive dwell time restriction. 
This is a 
linear  ODE 
on the vector-function $\bx  :  \RR_+  \to  \RR^d$ 
 with a matrix function $A(t)$ 
taking values from a given finite  set $\cA = \{A_1, \ldots , A_n\}$ called 
 \emph{control set} of matrices (\emph{regimes, modes}). The \emph{control function}, 
 or the  
\emph{switching law}  is an arbitrary piecewise constant function 
$A: \RR_+ \to \cA$ with the lengths of every stationary interval (\emph{switching interval})
at least~$m$, where~$m> 0$ is a given dwell-time constraint. This dwell time assumption has the practical meaning that 
the switches between regimes are not instantaneous but take some positive time. 

For the sake of simplicity we consider only the case 
of finite control sets~$\cA$ and of the same dwell time for all regimes~$A_j$. 
All our results are easily extended to general conditions, see 
Remark~\ref{r.10}.

\subsection{Statement of the problem}

Systems~\eqref{eq.main} regularly arise in engineering applications,  
see, for instance,~\cite{B08, K10, L03, SS00} and references therein. 
One of the main problems is to find or estimate the fastest possible growth rate of trajectories, in particular, to decide about the stability of the system. The stability problem under 
 the dwell time restrictions have been studied in numerous work~\cite{BCM10, BS13, CC17, CCGMS12, CGPS21, SFS15, X15}.

The {\emph Lyapunov exponent} 
$\sigma = \sigma (\cA)$ of a linear switching system  is the infimum of the numbers~$\alpha$ such 
that for every trajectory~$\bx(\vardot)$, we have 
    $\|\bx(t)\|  \le  C e^{\alpha  t}$, $t \ge 0$, 
    for some constant~$C = C(\bx)$. Clearly, if $\sigma < 0$, then the system is asymptotically stable, 
    i.e., all its trajectories tend to zero as~$t\to +\infty$. 
 The converse is also true, although less trivial~\cite{BCM10, MP89}. Thus, the asymptotic stability is equivalent to the inequality~$\sigma < 0$. If, in addition, the system is 
 irreducible, i.e., the matrices from~$\cA$ do not share common nontrivial invariant subspaces, 
 then the inequality~$\sigma \le 0$ is equivalent to the (usual) stability, when all 
 trajectories are bounded. 
 Let us note that if the matrices of the control set~$\cA$ 
 have a common invariant subspace, then the computation of the Lyapunov exponent is reduced 
 to  problems in smaller dimensions by a common matrix factorization~\cite{L03}. Therefore, in what follows we additionally assume that 
 the system is irreducible.

Most of results on the stability of a linear switching systems 
have been obtained without the dwell time assumption. 
Usually it is done either by 
constructing a Lyapunov function \cite{BM99, GC06, GLP17, LM99, MP89},
or by approximating of the trajectories~\cite{PV09, RCS11, SCSS11}.
The latter includes the discretization approach, which is, of course,  the most 
 obvious way to analyse~ODEs. Replacing the derivative $\dot \bx (t)$ by 
 the divided difference $\frac{\bx(t+h) - \bx(t)}{h}$, we get the Euler 
 piecewise-linear approximation. Another way of discretization is  to replace 
 the switching law by a piecewise-constant function with intervals being 
 multiples of a given step size~$h > 0$. Solving the 
 corresponding ODE in each interval we obtain 
 ~$\bx(t+h) = e^{hA_j}\bx(t)$, which gives a piecewise-exponential approximation
 of the trajectory. 
 Both of those methods lead 
  to a discrete time switching system of the form~$\by_{k+1} = B(k)\by_k$, $k\ge 0$. 
 For the Euler discretization, the control set~$\cB$ consists of matrices~$B_j = I + hA_j$, for the 
 second approach, $B_j = e^{hA_j}$. The  Lyapunov exponent of the discrete system
 (we denote it by~$\sigma_h$) can be efficiently computed by evaluating the  
\emph{joint spectral radius}
 of the matrix family~$\cB$~\cite{B88}. 
 The recent progress in the joint spectral radius problem~\cite{GP13, M20}
 allows us to calculate it with a good accuracy or, in most cases, even to find it precisely.  
Thus,  the  Lyapunov exponent $\sigma(\cA)$ of system~\eqref{eq.main}
 can be efficiently computed, provided it is close enough to the Lyapunov exponent~$\sigma_h$ 
 of its discretization.
 The crucial problem is to estimate the precision, i.e., 
 the difference~$|\sigma(\cA) - \sigma_h|$, depending on the step size~$h$.

The discretization method in the stability problem has drawn much attention in the recent 
literature and several estimates for the precision~$|\sigma(\cA) - \sigma_h|$
have been obtained~\cite{GLP17, PJ16, RCS11, SCSS11}.  For both aforementioned methods,
it  is linear in~$h$, i.e.,~$|\sigma(\cA) - \sigma_h| \le Ch$, where, for 
 the constant~$C$, usually only rough upper bounds are known.

\subsection{Overview of the main results}
We establish lower and upper bounds  which localize the Lyapunov exponent of the system~\eqref{eq.main} to an interval  of length at most~$C\,h^2$,
and moreover, show that $C$ can be explicitly found. 
 
 The main result, Theorem~\ref{th.20}, 
 states that the second method of discretization, when~$B_j = e^{hA_j}$, leads to  
 the double inequality~$\sigma_h  \le  \sigma(\cA)  \le  \sigma_h  +   C\,h^2$, where 
 the constant~$C$ is expressed  by means of the so-called  \emph{extremal multinorm}. 
 This multinorm  is 
 constructed simultaneously  with the computation of~$\sigma_h$. 
 
 The estimate from~Theorem~\ref{th.20}  gives a 
 very precise method of 
 computation of the Lyapunov exponent~$\sigma(\cA)$. The numerical results 
in  dimensions $d\le 9$ are given in Section~6. 
In dimension~$d=6$ the  precision is usually between~$0.1$ and $0.15$.  The computation time 
on our PC (5 cores, 3.6~GHz) is about one hour. For positive systems,  
 the method performs much better even in high dimensions.  For $d  \le 200$,
 the precision does not exceed~$0.01$. For $d\le 100$, the computation takes a few seconds.

The use of the new estimate, however,  is complicated by the fact that 
the stability of a discrete system under the dwell time constraint 
cannot be analysed by the traditional scheme. For such systems,  
the Lyapunov function may not exist at all~\cite{CGP18}.
The computation of the Lyapunov exponent 
requires the concept of 
\emph{restricted} or \emph{Markovian} joint spectral radius~\cite{D14, K14, PEDJ16, PMJ17}, which are special cases of the 
recent theory of dynamical system on graphs developed in~\cite{CGP18, P19, PJ15}. 
That is why we need to do preliminary work to
introduce the system on  graphs and the concept of Lyapunov  multinorm.

\begin{remark}\label{r.10}{\em 
Our estimates for the approximation rate do not include the number of matrices, which makes them 
applicable for arbitrary compact control set~$\cA$.  They are also easily 
generalized to mode-dependent constraints on the dwell time, 
when $m$ is a function of the matrix~$A \in \cA$. 
Finally, the results can be extended to mixed (discrete-continuous) systems 
with hybrid control. In particular, when every switching from the regime~$A_i$ to $A_j$
is realized by a given linear operator~$E_{ji}$ that can be different from~$ e^{\,m A_j}$.
}
 \end{remark}

\subsection{The structure of the paper}

In  Section~\ref{sec:pre} we introduce auxiliary facts and notation such as Markovian joint spectral radius, dynamical systems, and Lyapunov multinorms. 
The fundamental theorem 
and corollaries are   
formulated and discussed in Sections~\ref{sec:fundamental},
the proofs are given in Section~\ref{sec:proof}.
Sections~\ref{sec:alg} and~\ref{sec:num} present the algorithm and analyse 
numerical properties. 

Throughout the paper we denote vectors by bold letters,~$I$ is the identity matrix,~$\rho(X)$ is the spectral radius of the matrix~$X$, 
which is the maximum modulus of its eigenvalues.

\section{Preliminary facts. Dynamical systems on graphs}
\label{sec:pre}

The discretization with  step size~$h$ approximates the system~\eqref{eq.main}
with the dwell time parameter~$m> 0$
 by the discrete-time system~$\by_{k+1} = B(k)\by_k$, where 
for each~$k \ge 0$, the matrix~$B(k)$ is either~$e^{hA_j}$ or~$e^{mA_j}$,
$A_j \in \cA$, see Definition~\ref{d.30} below.  The dwell time assumption 
imposes a firm restriction to the switching law: It must be a sequence of
blocks of the form $B(k+N) \cdots  B(k+1)B(k)  = (e^{hA_j})^{N}e^{mA_j}$, where 
$j \in \{1, \ldots , n\}$,  the length $N\ge 0$ depends  on the block, and 
two neighbouring blocks must have different modes~$j$.  
Thus, each block has to begin with~$e^{ mA_j}$ followed by a 
power of~$e^{ hA_j}$, and then switch to the next block with a different~$j$. 
 The general theory of discrete  systems with constraints on switching laws has been developed in~\cite{D14, CGP18, K14, PMJ17}  and then extended to 
general dynamical systems on graphs. We begin with necessary definitions and notation. 

Consider a directed graph~$G$ with vertices~$v_1, \ldots , v_n$
and edges~$\ell_{ji}$, which may include loops~$\ell_{ii}$. Each vertex~$v_i$ is associated to a finite-dimensional linear 
space~$V_i$. To every edge~$\ell_{ji}$, if it exists, we associate 
a linear operator~$E_{ji}: V_i\to V_j$ and a positive number~$h_{ji}$, which is referred to as the \emph{time of action} of~$E_{ji}$. 
\begin{definition}\label{d.10}
The {\emph dynamical system on the graph~$G$} is
the equation~$\bx_{k+1}  = E(k)\bx_k$, 
on the sequence~$\{\bx_k\}_{k\ge0}$, where for each~$k$, the operator~$E(k)$
is chosen from the set~$\{E_{ji}\}_{j=1}^n$ if~$\bx_k \in V_i$.
The point~$\bx_k$ corresponds to the time~$t_k$  and $t_{k+1} = t_k + h_{ji}$.   
\end{definition}

The usual discrete-time linear switching system~$\bx_{k+1} = B(k) \bx_{k}$, 
$B(k) \in \cB = \{B_j\}_{j=1}^n$,  
corresponds to the case when~$G$ is a complete graph,~$V_j$ are all equal to~$\RR^d$, 
every vertex~$v_j$ has $n$ incoming edges~$\ell_{ji}, i=1, \ldots , n$, 
with the operators~$E_{ji}$ equal to the same operator~$B_j$,
and all time intervals~$h_{ji}$ are equal to~$1$. 

Let us have an arbitrary  dynamical system on a graph. We assume that 
each space~$V_j$ is equipped with a certain norm~$\|\vardot\|_{j}$. The collection of those 
norms~$\{\|\vardot\|_j\}_{j=1}^n$ is called a \emph{multinorm}. 
We use the short notation~$\|\bx\|$ meaning that~$ \|\bx\|  = \|\bx\|_j$
for $\bx \in V_j$.

Every trajectory~$\{\bx_k\}_{k=0}^{\infty}$ of the system on~$G$ 
corresponds to an infinite path~$v_{i_0} \to v_{i_1} \to \cdots $, where  
$\bx_0 \in V_{i_0}$ is a starting point and~$\bx_{k+1} = E_{i_{k+1} i_k}\bx_k$, $k \ge 0$. 
Thus, $\bx_{k} \in V_{i_k}$ for every~$k$. 
Every point~$\bx_k$ corresponds to the 
time~$t_k = \sum_{s=1}^{k}h_{i_{s} i_{s-1}}$, which is the total time 
of the way from~$v_{i_0}$ to~$v_{i_k}$.

The \emph{(Markovian) joint spectral radius} $\hat \rho  = \hat \rho(\cA)$ of the system 
is 
\begin{equation*}
\hat \rho  =  \lim\limits_{k\to \infty}  \max\limits_{\{\bx_s\}_{s=0}^k}  \|\bx_k\|^{1/t_k},
\end{equation*}
where the maximum is computed over all trajectories~$\{\bx_s\}_{s\ge 0}$ with~$\|\bx_0\| =1$. 
Thus, for every trajectory, we have~$\|\bx_k\|  \le  C \hat \rho^{t_k}$, $k\in \NN$.
The joint spectral radius is the rate of the fastest  growth of trajectories. 
       See~\cite{CGP18} for the correctness of the definition and for basic properties of this 
notion. 

Let us now consider an arbitrary cycle of the graph~$G$: $v_{i_0}\to v_{i_1} \to \cdots \to v_{i_k} = v_{i_0}$. Denote by $\Pi$ the product of operators~$E_{i_ki_{k-1}} \cdots  E_{i_1i_0}$
along the cycle. We have~$\Pi\bx_0 = \bx_k$. For the spectral  
radius $\rho(\Pi)$, which is the maximal modulus of eigenvalues of~$\Pi$, we 
have~${[\rho(\Pi)]^{ 1/t_k}  \le  \hat \rho}$.    
Indeed, the left-hand side is the rate of growth of a periodic trajectory 
going along that cycle, and it does not exceed the maximal rate of growth~$\hat \rho$
over all trajectories. 
On the other hand, there are cycles for which the left-hand side is arbitrarily close 
to~$\hat \rho$~\cite{CGP18, K14}. 

\begin{definition}\label{d.20}
A multinorm is called \emph{ extremal} for a system on a graph 
if for each~$i, j \in \{1, \ldots , n\}$
and for every~$\bx \in V_i$, we have $\|E_{ji} \bx\|_j  \le  \hat \rho^{ h_{ji}} \|\bx\|_i$, 
provided the edge~$\ell_{ji}$ exists. 
\end{definition} 
For the extremal multinorm, for every trajectory, we have 
$\|\bx_k\|  \le  \hat \rho^{t_k}\|\bx_0\|$, $k \in \NN$.

The extremal multinorm always exists, provided the system is 
irreducible. The reducibility means the existence 
of subspaces~$V_j' \subset V_j$, $i =1, \ldots , n$, where 
at least one subspace is nontrivial and at least one inclusion is strict, such that 
$E_{ji}V_i' \subset V_j'$, whenever the edge~$\ell_{ji}$ exists. 


\noindent \textbf{Theorem A}~\cite{CGP18}
\emph{An irreducible dynamical system on a graph 
possesses an extremal multinorm}. 

Theorem A implies, in particular, that for every irreducible system, we have~$\hat \rho> 0$. 
Otherwise, $E_{ji}\bx  = 0$ for all~$\bx\in V_i$, i.e.,  all~$E_{ji}$ are 
equal to zero,  in which case the system is 
clearly reducible. Therefore, one can always 
normalize the operators as $\tilde E_{ji}  = \hat  \rho^{ -h_{ji}}E_{ji}$, after which  the 
new dynamical system (on the same graph and with the same time intervals) 
has the joint spectral radius equal to one. Moreover, it possesses the same extremal norm, 
for which~$\|\tilde E_{ji}\bx\|_j \le \|\bx\|_i$, hence,  
for every trajectory~$\tilde \bx_k$, the sequence~$\|\tilde \bx_k\|$ is non-increasing in~$k$.  

Now we are going to define the discretization of the dwell time constrained system~\eqref{eq.main} and present it as  a system on a suitable graph. Then we apply Theorem~A
and use an extremal multinorm to estimate the approximation rate. 

\begin{definition}\label{d.30}
  The \emph{$h$-discretization of the system~$\cA$   with  step size~$h$} is a system~$\cA_h$ 
on a complete graph with~$n$ vertices, in which~$V_j = \RR^d$, $E_{jj} = e^{\,hA_j}$,
$h_{jj} = h$
for all~$j = 1, \ldots , n$, and $E_{ji} = e^{\,mA_j}$, $h_{ji} = m$  for all 
pairs~$(i,j)$, $i\ne j$.  
\end{definition}
Thus, the $h$-discretization~$\cA_h$ is a dynamical system on a complete graph, 
where every vertex~$v_j$ has $n-1$ incoming edges from all other vertices, all associated to the operator~$e^{\,mA_j}$
with the time of action~$m$, and also has a loop associated to~$e^{\,hA_j}$ with time
of action~$h$. 
Respectively, a multinorm~$\|\vardot \| = \{\|\vardot \|_j\}_{j=1}^n$ can now be interpreted as 
a collection of norms in~$\RR^d$, each associated to the corresponding regime~$A_j$.
See Figure~\ref{fig:hdiscretization} for an example of a system~$\cA$ with three matrices. 

The $h$-discretization can also be presented as a discrete-time linear switching system in $\RR^d$: 
$\bx_{k+1} = B(k)\bx_k, \allowbreak k \ge 0$, where the sequence $B(k)$ has values from the control 
set~$\{e^{\,mA_j},\allowbreak e^{\,hA_j}\}_{j=1}^n$   with the following  restriction:

Every element~$B(k)$  equal to $e^{\,mA_j}$ or~$e^{\,hA_j}$ can be followed 
by either~$e^{hA_j}$ or $e^{mA_s}$, $s\ne j$.

The  operators~$e^{\,mA_j}, e^{\,hA_j}$ act during the time intervals of lengths~$m$ and $h$ respectively. We denote by~$\hat \rho(\cA_h)$ the (Markovian) joint spectral radius of the 
system~$\cA_h$. By Theorem~A, if the family~$\cA$ is irreducible, then 
for every~$h$ its~\mbox{$h$-discretization} possesses an extremal norm. Note that 
this norm can be different for different~$h$.

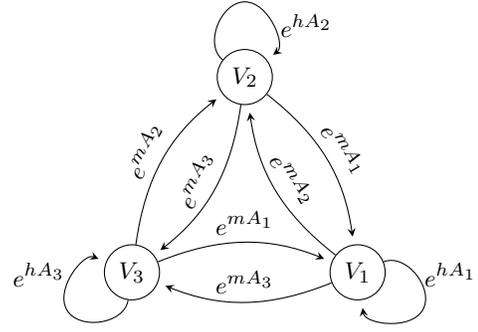
\begin{figure}
\centering\small
\begin{tikzpicture}
    \tkzDefPoint(0.,0.){V1}
    \tkzDefPoint(1.,0.){V2}
    \tkzDefPoint(0.5,0.860){V3}
    \def \d {22};
    \def \sa {3};
    \def \sb {0};
    \def \sc {3};
    \draw (4,4) node[draw, circle] (V1) [] {$V_1$}
      ++(120:3) node[draw, circle] (V2) [] {$V_2$}
      ++(240:3) node[draw, circle] (V3) [] {$V_3$};

    \draw[<-, > = stealth, shorten < = {\sa}, shorten > = {\sb}] (V1) to [out= {120-\d}, in={-60+\d}] node[sloped, midway, above] {$e^{mA_1}$} (V2);
    \draw[->, > = stealth, shorten < = {\sb}, shorten > = {\sa}] (V1) to [out= {120+\d}, in={-60-\d}] node[sloped, midway, above] {$e^{mA_2}$} (V2);
    \draw[<-, > = stealth, shorten < = {\sa}, shorten > = {\sb}] (V1) to [out= {180-\d}, in={  0+\d}] node[sloped, midway, above] {$e^{mA_1}$} (V3);
    \draw[->, > = stealth, shorten < = {\sb}, shorten > = {\sa}] (V1) to [out= {180+\d}, in={  0-\d}] node[sloped, midway, above] {$e^{mA_3}$} (V3);
    \draw[<-, > = stealth, shorten < = {\sa}, shorten > = {\sb}] (V2) to [out= {240-\d}, in={ 60+\d}] node[sloped, midway, above] {$e^{mA_2}$} (V3);
    \draw[->, > = stealth, shorten < = {\sb}, shorten > = {\sa}] (V2) to [out= {240+\d}, in={ 60-\d}] node[sloped, midway, above] {$e^{mA_3}$} (V3);
    
    \draw[<-, > = stealth, shorten < = {\sc}] (V1) to [out= {-60-\d}, in= 240] node[below] {          } ++(-30:1) to [out= 60, in={  0+\d}] node[right] {$e^{hA_1}$}(V1);
    \draw[<-, > = stealth, shorten < = {\sc}] (V2) to [out= { 60-\d}, in=   0] node[right] {$e^{hA_2}$} ++( 90:1) to [out=180, in={120+\d}] (V2);
    \draw[<-, > = stealth, shorten < = {\sc}] (V3) to [out= {180-\d}, in= 120] node[left]  {$e^{hA_3}$} ++(210:1) to [out=-60, in={240+\d}] (V3);
  
\end{tikzpicture}
\vspace*{-2mm}
\caption[]{The dynamical system on graph, $n=3$.}
\label{fig:hdiscretization}
\end{figure}

\section{The fundamental theorem}
\label{sec:fundamental}
Now we are formulating the main result. We consider a linear switching system~$\cA$ given by~\eqref{eq.main}
and its $h$-dis\-cret\-iza\-tion~$\cA_h$. The joint spectral radius of~$\cA_h$ is denoted by~$\hat \rho(\cA_h)$. 
\begin{theorem}\label{th.20}
Let~$\cA$ be an irreducible  continuous-time linear switching system with the dwell time
constraint~$m$.  
Then for  every discretization step $h > 0$, 
 we have  
 \begin{equation}\label{eq.40}
\sigma_h 
\le
\sigma(\cA)
\le
\sigma_h - \frac{1}{m} \ln  \left(1 - \frac{\|(\cA - \sigma_h I)^2\|}{8} h^2 \right), 
 \end{equation} 
where $\sigma_h  = \ln  \hat \rho(\cA_h)$, 
$\|\vardot \| = 
\{\|\vardot \|_j\}_{j=1}^n$ is an extremal multinorm of~$\cA_h$, and 
\begin{equation*}
\|(\cA - \sigma_h I)^2\|  = \max_{j=1, \ldots , n} \|(A_j - \sigma_h I)^2\|_j . 
\end{equation*}
\end{theorem}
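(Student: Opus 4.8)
The plan is to establish the two inequalities of~\eqref{eq.40} separately: the left one by a direct comparison of trajectory classes, the right one by a quantitative second-order estimate for the matrix exponential.

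\emph{Lower bound and normalization.} For $\sigma_h \le \sigma(\cA)$ I would observe that every trajectory of the discretized system $\cA_h$ is a genuine trajectory of~\eqref{eq.main} under an admissible switching law: each block $(e^{hA_j})^N e^{mA_j}$ keeps the mode $A_j$ active during a time $m + Nh \ge m$, so the dwell-time constraint is met. Consequently the fastest growth rate of $\cA_h$ cannot exceed that of $\cA$, which gives $\sigma_h = \ln\hat\rho(\cA_h) \le \sigma(\cA)$. For the upper bound I would first normalize: replacing every $A_j$ by $A_j - \sigma_h I$ shifts both $\sigma(\cA)$ and $\sigma_h$ by $-\sigma_h$, turns $\cA_h$ into a system with $\hat\rho(\cA_h) = 1$, and leaves both the extremal multinorm and the number $\|(\cA - \sigma_h I)^2\|$ unchanged. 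Hence I may assume $\sigma_h = 0$, fix by Theorem~A an extremal multinorm (Definition~\ref{d.20}) satisfying the loop contraction $\|e^{hA_j}\bz\|_j \le \|\bz\|_j$ and the edge contraction $\|e^{mA_j}\bz\|_j \le \|\bz\|_i$ for $i \ne j$, and reduce everything to proving $\sigma(\cA) \le \beta$, where $\beta = -\frac1m\ln\bigl(1 - \frac{q}{8}h^2\bigr)$ and $q = \max_j \|A_j^2\|_j = \|(\cA - \sigma_h I)^2\|$.

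\emph{The key estimate.} The heart of the proof is the bound $\|e^{rA_j}\bz\|_j \le \bigl(1 - \frac{q_j}{8}h^2\bigr)^{-1}\|\bz\|_j$ for all $r \in [0,h]$, where $q_j = \|A_j^2\|_j$ (meaningful precisely when $\frac{q}{8}h^2 < 1$, which is exactly the range in which the right side of~\eqref{eq.40} is defined). I would fix $\bz$, set $\bz(s) = e^{sA_j}\bz$ on $[0,h]$, and use $\bz''(s) = A_j^2\bz(s)$. Representing $\bz(r)$ through its linear interpolant $\ell(r)$ between $\bz(0)$ and $\bz(h)$ and the Green's-function remainder,
\[
\bz(r) = \ell(r) - \int_0^h G(r,\xi)\,A_j^2\bz(\xi)\,d\xi,
\]
with $G \ge 0$ and $\int_0^h G(r,\xi)\,d\xi = \frac{r(h-r)}{2} \le \frac{h^2}{8}$, I would take norms. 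The loop contraction gives $\|\bz(h)\|_j \le \|\bz\|_j$, so the convex combination $\ell(r)$ satisfies $\|\ell(r)\|_j \le \|\bz\|_j$; bounding $\|A_j^2\bz(\xi)\|_j \le q_j\|\bz(\xi)\|_j \le q_j M$ with $M = \max_{[0,h]}\|\bz(s)\|_j$ yields $\|\bz(r)\|_j \le \|\bz\|_j + \frac{q_j}{8}h^2 M$. Maximizing the left side over $r$ gives $M\bigl(1 - \frac{q_j}{8}h^2\bigr) \le \|\bz\|_j$, which is the claim. This is where the constant $\frac18$ and the power $h^2$ originate, and it is the step I expect to be the main obstacle; everything else is bookkeeping.

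\emph{Assembly.} With the estimate available I would bound an arbitrary trajectory of $\cA$ along the extremal multinorm. On a switching interval with mode $A_j$ and length $\tau \ge m$, I write $\tau = m + Nh + r$ with $N \ge 0$ integer and $0 \le r < h$, and factor $e^{\tau A_j} = e^{rA_j}(e^{hA_j})^N e^{mA_j}$. Read from right to left, the edge contraction absorbs $e^{mA_j}$ (the switch into mode $j$), the loop contraction absorbs $(e^{hA_j})^N$, and the key estimate bounds $e^{rA_j}$ by $\bigl(1 - \frac{q}{8}h^2\bigr)^{-1} = e^{\beta m} \le e^{\beta\tau}$. Chaining these block bounds along the whole trajectory, with the output norm $\|\vardot\|_j$ of one block matching the input required by the next block's edge contraction, gives $\|\bx(T)\| \le e^{\beta T}\|\bx(0)\|$ at the switching instants; intermediate times and the first, edgeless block change only the multiplicative constant, not the exponent. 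Therefore $\sigma(\cA) \le \beta$, and undoing the normalization yields the right-hand inequality of~\eqref{eq.40}.
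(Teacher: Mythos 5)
Your proposal is correct, and its global architecture coincides with the paper's: the same observation that every trajectory of~$\cA_h$ is an admissible trajectory of~$\cA$ (giving $\sigma_h\le\sigma(\cA)$), the same normalization $\cA\mapsto\cA-\sigma_h I$ reducing to $\hat\rho(\cA_h)=1$ (you do it first, the paper does it last --- cosmetic), and the same assembly: factor each switching interval as $e^{rA_j}(e^{hA_j})^N e^{mA_j}$, absorb the edge and loop factors by extremality of the multinorm, bound the leftover $e^{rA_j}$ by $\bigl(1-\frac{q_j}{8}h^2\bigr)^{-1}=e^{\alpha_j m}\le e^{\alpha_j\tau}$, and chain. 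Where you genuinely diverge is the proof of the central quadratic estimate. The paper splits it into two statements: a geometric chord lemma (Lemma~\ref{l.30}), proved via the dual norm and a separating functional at the farthest point of the arc combined with a Taylor expansion, and then Theorem~\ref{th.10}, which converts the chord-distance bound into $\|\bx(\tau)\|\le\bigl(1-\frac{h^2}{8}\|A^2\|\bigr)^{-1}\max\{\|\bx(0)\|,\|\bx(h)\|\}$ using convexity of the norm. You instead represent $\bz(r)$ as its linear interpolant minus the Green's-function integral of $\ddot\bz=A_j^2\bz$ and use $\int_0^h G(r,\xi)\,d\xi=\frac{r(h-r)}{2}\le\frac{h^2}{8}$; since $G\ge0$, taking norms under the integral is legitimate in an arbitrary norm, and the fixed-point bookkeeping with $M=\max_{[0,h]}\|\bz(s)\|_j$ reproduces exactly the same factor $\bigl(1-\frac{q_j}{8}h^2\bigr)^{-1}$. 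This merges the paper's Lemma~\ref{l.30} and Theorem~\ref{th.10} into a single self-contained analytic step with no loss in the constant and no convex-separation machinery; the paper's route, in exchange, isolates a chord-distance lemma in arbitrary norms that has independent geometric interest. Two further small points in your favour: you note explicitly that the estimate is only meaningful in the regime $\frac{q}{8}h^2<1$, precisely where the right-hand side of~\eqref{eq.40} is defined, and you state that the initial edgeless block and times $t-t_k<m$ affect only the multiplicative constant (all components of the multinorm being equivalent), details the paper's proof passes over more quickly.
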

\begin{remark}\label{r.40}{\em 
The estimate~\eqref{eq.40} uses two values: the joint spectral radius of 
the $h$-discretization $\hat \rho(\cA_h)$ and the operator norms of~$(A_j - \sigma_h I)^2$
in the $j$th component of the  extremal multinorm~$\{\|\vardot\|_j\}$ for~$\cA_h$. 
They are both  found by the invariant polytope algorithm~\cite{CGP18}.  We  
give a brief description in Section~5. 
}
\end{remark}  

Writing the Taylor expansion of the logarithm up  to the second order, we obtain 
the following
\begin{corollary}\label{c.10}
Under the assumptions of~Theorem~\ref{th.20}, we have 
 \begin{equation}\label{eq.50}
\sigma_h \le \sigma(\cA) \le
\sigma_h  + 
 \frac{\|(\cA - \sigma_h I)^2\|}{8m} h^2  +  \bigO(h^4), \quad \text{as $h\to 0$}. 
 \end{equation} 
\end{corollary}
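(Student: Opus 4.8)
The corollary is the Taylor expansion of the upper bound in Theorem~\ref{th.20}: with $x = \frac{\|(\cA - \sigma_h I)^2\|}{8}h^2 = \bigO(h^2)$ one has $-\frac1m\ln(1-x) = \frac{x}{m} + \bigO(x^2) = \frac{\|(\cA-\sigma_h I)^2\|}{8m}h^2 + \bigO(h^4)$, while the lower bound $\sigma_h \le \sigma(\cA)$ is carried over verbatim. So the real task is Theorem~\ref{th.20}, and the plan is to establish that first. For the lower bound I would observe that every trajectory of the $h$-discretization $\cA_h$ is an honest trajectory of~\eqref{eq.main}: each block $(e^{hA_j})^N e^{mA_j}$ is an exact flow map of $\dot\bx = A_j\bx$ over an interval of length $m+Nh \ge m$, so the dwell-time constraint is respected. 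The admissible discretized trajectories therefore form a subfamily of the continuous ones, and their growth rate $\sigma_h = \ln\hat\rho(\cA_h)$ cannot exceed $\sigma(\cA)$.

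The upper bound is the substance. Normalizing by $\sigma_h$, write $\tilde A_j = A_j - \sigma_h I$; by Theorem~A and the extremality of $\{\|\vardot\|_j\}$ the loop $e^{h\tilde A_j}$ contracts $\|\vardot\|_j$, and the cross edge $e^{m\tilde A_j}$ maps the unit ball of $\|\vardot\|_i$ into that of $\|\vardot\|_j$ for $i\ne j$. The key lemma I would prove is that for $t\in[0,h]$ one has $\|e^{t\tilde A_j}\bx\|_j \le (1 - \tfrac{c}{8}h^2)^{-1}\|\bx\|_j$, where $c = \|(A_j-\sigma_h I)^2\|_j$. To obtain it, set $f(t) = e^{t\tilde A_j}\bx$ and compare $f$ on $[0,h]$ with its linear interpolant $L$ between the endpoints $f(0)=\bx$ and $f(h)=e^{h\tilde A_j}\bx$. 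Both endpoints have $\|\vardot\|_j$-norm at most $\|\bx\|_j$ (the loop contracts), so $\|L(t)\|_j \le \|\bx\|_j$; the interpolation remainder obeys $\|f(t)-L(t)\|_j \le \frac{t(h-t)}{2}\sup_s\|f''(s)\|_j$ with $f''(s) = \tilde A_j^2 e^{s\tilde A_j}\bx$, whence $\sup_s\|f''(s)\|_j \le c\,M$ for $M = \max_{[0,h]}\|f\|_j$. Evaluating at the maximizing $t$ and using $\max t(h-t) = h^2/4$ yields the self-referential inequality $M \le \|\bx\|_j + \tfrac{c}{8}h^2 M$, which rearranges to the claimed bound.

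I expect this lemma to be the \textbf{main obstacle}: one must convert the two-sided data (contraction at the two endpoints $0$ and $h$) into a uniform bound on the whole interior, and then isolate the sharp constant $1/8$ from a remainder that is bounded in terms of the very quantity being estimated. Everything else is bookkeeping. Assembling along a trajectory, any mode segment of duration $\tau\ge m$ factors as $e^{\tau\tilde A_j} = e^{r\tilde A_j}(e^{h\tilde A_j})^N$ with $r\in[0,h)$; the $N$ loops contract and the lemma controls the remainder, so after its $e^{m\tilde A_j}$ entry each segment multiplies the extremal norm by at most $(1-\tfrac{c}{8}h^2)^{-1}$, with $c$ now taken as the uniform bound $\|(\cA-\sigma_h I)^2\| = \max_j\|(A_j-\sigma_h I)^2\|_j$, while the entry edge contributes a factor $\le 1$.

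Since every dwell time is at least $m$, a trajectory of total length $T$ has at most $T/m$ segments, so $\|\bx(T)\| \le C\,(1-\tfrac{c}{8}h^2)^{-T/m}\|\bx_0\|$. Taking $\tfrac1T\ln(\vardot)$ as $T\to\infty$ and adding back $\sigma_h$ gives $\sigma(\cA) \le \sigma_h - \tfrac1m\ln\!\left(1 - \tfrac{\|(\cA-\sigma_h I)^2\|}{8}h^2\right)$, which is Theorem~\ref{th.20}; the corollary then follows from the expansion $-\ln(1-x)=x+\bigO(x^2)$ with $x=\bigO(h^2)$ recorded at the outset.
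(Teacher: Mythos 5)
Your proposal is correct, and it correctly identifies that the corollary itself is nothing more than the expansion $-\frac1m\ln(1-x)=\frac{x}{m}+\bigO(x^2)$ with $x=\frac{\|(\cA-\sigma_h I)^2\|}{8}h^2=\bigO(h^2)$ — that is literally the paper's entire proof of Corollary~\ref{c.10}, stated in one line before the statement. Where you diverge is inside Theorem~\ref{th.20}, and only in the proof of the key per-interval estimate. The paper's route goes through Lemma~\ref{l.30} (distance from a $C^2$-arc to its chord is at most $\frac{h^2}{8}\|\ddot\bx\|_{C[0,h]}$), proved geometrically: take the farthest point $\bx(\xi)$ from the chord, separate the chord from the ball around $\bx(\xi)$ by a functional $\bp$ with $\|\bp\|^*=1$, assume WLOG $\xi\le h/2$, and Taylor-expand $f(t)=(\bp,\bx(t))$ at the interior maximum where $f'(\xi)=0$; Theorem~\ref{th.10} then converts this into $\|\bx(\tau)\|\le(1-\frac{h^2}{8}\|A^2\|)^{-1}\max\{\|\bx(0)\|,\|\bx(h)\|\}$ via convexity of the norm. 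You instead dualize the classical linear-interpolation remainder: for a norming functional $\bp$ the scalar function $g(s)=(\bp,f(s)-L(s))$ vanishes at $0$ and $h$, so $g(t)=\frac{g''(\eta)}{2}t(t-h)$, giving $\|f(t)-L(t)\|_j\le\frac{t(h-t)}{2}\sup_s\|f''(s)\|_j$ and, after maximizing $t(h-t)$, the same constant $\frac18$ and the same self-referential inequality $M\le\|\bx\|_j+\frac{c}{8}h^2M$. This is a genuinely different and arguably cleaner derivation — it avoids the farthest-point/separation construction and the $\xi\le h/2$ case distinction, and produces the paper's Theorem~\ref{th.10} in one stroke. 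The subsequent assembly is the same computation in different bookkeeping: the paper smears the loss into $e^{\alpha_j m}\le e^{\alpha_j t}$ for $t\ge m$ per regime, while you charge one factor $(1-\frac{c}{8}h^2)^{-1}$ per segment and count at most $T/m$ segments. Two small points you leave implicit, both harmless: the rearrangement of the self-referential inequality needs $h<\sqrt{8/c}$ (the same hypothesis as the paper's Theorem~\ref{th.10}, and vacuous as $h\to0$), and times falling inside the initial dwell portion of the final, possibly incomplete segment are not controlled by the multinorm — they contribute only a uniform factor $\max_j\max_{s\in[0,m]}\|e^{sA_j}\|$, which your constant $C$ correctly absorbs (the paper glosses over the same point).
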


\begin{remark}{\em 
If $h=m$, then we have basically the discretization of an unrestricted system, and~\eqref{eq.50} becomes
\begin{equation*}
\sigma_h \le  \sigma(\cA) \le \sigma_h  + 
 \frac{\|(\cA - \sigma_h I)^2\|}{8} h  +  \bigO(h^3), \quad \text{as $h\to 0$},
\end{equation*}
which again reveals the linear dependence on the discretization step $h$ for unrestricted systems.
}
\end{remark}

\begin{remark}\label{r.45}{\em 
If $\norm{\vardot}$ is an approximation 
of an extremal multinorm of~$\cA_h$ up to a factor of $1+\varepsilon$ 
i.e.\ (in the notation of Definition~\ref{d.20})
\begin{equation*}
\hat \rho^{h_{ji}} \|\bx\|_i
\le
\|E_{ji} \bx\|_j
\le 
(1+\varepsilon) \hat \rho^{h_{ji}} \|\bx\|_i,
\end{equation*}
then~\eqref{eq.50} becomes
\begin{equation*}
\sigma_h^- 
\le
\sigma(\cA)
\le
\sigma_h^+  - \frac{1}{m} \ln  \left(1 - \frac{\|(\cA - \sigma_h^- I)^2\|}{8} h^2  \right), 
\end{equation*}
where
$\sigma_h^-
=
\ln \hat \rho(\cA_h)$,
and $\sigma_h^+
=
\ln (1+\varepsilon)\hat\rho(\cA_h)$.
}\end{remark}

The quadratic rate of approximation in formula~\eqref{eq.50}
is quite unexpected since the trajectories of the continuous-time 
system are approximated by the trajectories of its~$h$-discretization 
only with the  linear rate. Nevertheless, the approximation of the 
Lyapunov exponent is quadratic.
\begin{remark}\label{r.50}{\em 
The performance of the estimate~\eqref{eq.40}
can be spoiled in two cases: either the dwell time~$m$ is too small,
or the  operator norm of~$(A_j- \sigma_h I)^2$ is too large. 
Note also that~\eqref{eq.40} is an a posteriori estimate since the operator norm 
depends on~$h$ and is not known in advance.
}\end{remark}

Theorem~\ref{th.20} yields the following stability conditions for system~\eqref{eq.main}: 

\begin{corollary}\label{c.20}
If an $h$-discretization~$\cA_h$ is unstable, then so is the 
system~$\cA$.   If~$\cA_h$ is stable
and~$\hat \rho \le \bigl(1 - \frac{h^2\|(\cA - (\ln \rho)I)^2\|}{8} \bigr)^{1/m}$, 
then~$\cA$ is stable. 
\end{corollary}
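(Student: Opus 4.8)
The plan is to read off both statements directly from the two-sided estimate~\eqref{eq.40} of Theorem~\ref{th.20}, merely rephrasing it in the language of stability. Under the standing irreducibility assumption, the continuous system~$\cA$ is stable exactly when~$\sigma(\cA)\le 0$ and unstable when~$\sigma(\cA)>0$; correspondingly, its $h$-discretization~$\cA_h$ is stable when~$\hat\rho(\cA_h)\le 1$, that is when~$\sigma_h=\ln\hat\rho(\cA_h)\le 0$, and unstable when~$\sigma_h>0$. I will also write~$\rho=\hat\rho(\cA_h)$ throughout, so that the quantity~$\ln\rho$ appearing in the corollary is precisely~$\sigma_h$, and the norm~$\|(\cA-(\ln\rho)I)^2\|$ is the norm~$\|(\cA-\sigma_h I)^2\|$ from Theorem~\ref{th.20}.

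For the first claim I would simply invoke the left-hand inequality~$\sigma_h\le\sigma(\cA)$ of~\eqref{eq.40}. If~$\cA_h$ is unstable, then~$\sigma_h>0$, hence~$\sigma(\cA)\ge\sigma_h>0$, so~$\cA$ is unstable. No further work is required.

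For the second claim I would combine the right-hand inequality of~\eqref{eq.40} with the quantitative hypothesis. Taking logarithms in the assumed bound~$\hat\rho\le\bigl(1-\tfrac{h^2\|(\cA-\sigma_h I)^2\|}{8}\bigr)^{1/m}$ (both sides are positive, the base lying in~$(0,1]$ for~$h$ small) and multiplying by~$m>0$ gives~$m\sigma_h\le\ln\bigl(1-\tfrac{\|(\cA-\sigma_h I)^2\|}{8}h^2\bigr)$, which is exactly the statement that the upper endpoint~$\sigma_h-\tfrac{1}{m}\ln\bigl(1-\tfrac{\|(\cA-\sigma_h I)^2\|}{8}h^2\bigr)$ in~\eqref{eq.40} is non-positive. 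Substituting into the right-hand inequality of~\eqref{eq.40} then yields~$\sigma(\cA)\le 0$, i.e.\ stability of~$\cA$. I would remark in passing that, since the base does not exceed~$1$, the same inequality already forces~$\sigma_h\le 0$, so the hypothesis that~$\cA_h$ be stable is in fact automatic and is stated only for clarity.

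The argument is essentially a translation, so I do not expect a genuine obstacle; the only points needing care are bookkeeping ones. First, one must match the sign conventions for (in)stability to the signs of~$\sigma$ and~$\sigma_h$. Second, one must track the direction of the inequalities through the logarithm: it is monotone increasing, but here produces a non-positive value precisely because the base~$1-\tfrac{\|(\cA-\sigma_h I)^2\|}{8}h^2$ is at most~$1$. Finally, one should assume~$h$ small enough that this base is strictly positive, so that the logarithm, and with it the whole estimate of Theorem~\ref{th.20}, is well defined.
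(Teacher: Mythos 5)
Your proof is correct and takes essentially the same route as the paper, which likewise reads both claims directly off the two-sided estimate~\eqref{eq.40}: the first claim from the lower bound $\sigma_h \le \sigma(\cA)$, and the second from checking that the hypothesis makes the upper endpoint $\sigma_h - \frac{1}{m}\ln\bigl(1-\frac{\|(\cA-\sigma_h I)^2\|}{8}h^2\bigr)$ non-positive. Your side remark that the stability of~$\cA_h$ is automatic under the quantitative hypothesis (since the base is at most~$1$, forcing $\hat\rho \le 1$) is a correct minor refinement not present in the paper's two-line proof.
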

\begin{proof}
The first inequality in~\eqref{eq.40} implies that if~$\hat \rho > 1$
and hence~$\sigma_h > 0$, then~$\sigma (\cA) > 0$. The converse is established similarly.
\end{proof}

\section{Proofs of the main results}
\label{sec:proof}
The proof of Theorem~\ref{th.20} is based on a simple geometrical argument.  
If a curve connects 
two ends of a segment of length~$h$, then the distance from 
this curve to the segment does not exceed $h^2$ multiplied by the curvature 
and by a certain constant. The nontrivial moment is 
that we need this property in an arbitrary norm in~$\RR^d$ and need to evaluate 
the constant depending on this norm. Then we apply this fact to each component~$\|\vardot \|_j$
of  the extremal norm of the system~$\cA_h$ and estimate the growth of trajectory 
of the system~$\cA$. 

\begin{lemma}\label{l.30}
Let~$\|\vardot\|$ be an arbitrary norm in~$\RR^d$ and 
$\bx: [0,h]\to \RR^d$ be a $C^2$-curve.  
Then, for every~$\tau \in [0,h]$, 
the distance from the point~$\bx(\tau)$ to the segment~$[\bx(0),\bx(h)]$
does not exceed~$\frac{h^2}{8} \|\ddot \bx\|_{C[0, h]}$. 
\end{lemma}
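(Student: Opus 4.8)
The plan is to reduce the claimed distance bound to a bound on the deviation of the curve from the affine map that linearly interpolates its two endpoints, and then to represent that deviation by an explicit integral kernel whose $L^1$-mass can be computed by hand.

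First I would set $\bz(t) = \bx(t) - \bx(0) - \frac{t}{h}\bigl(\bx(h) - \bx(0)\bigr)$, the difference between $\bx$ and its linearly parametrised chord. Since the point $\bx(0) + \frac{\tau}{h}\bigl(\bx(h) - \bx(0)\bigr)$ lies on the segment $[\bx(0),\bx(h)]$ for every $\tau \in [0,h]$, the distance from $\bx(\tau)$ to the segment is at most $\norm{\bz(\tau)}$, so it suffices to bound $\norm{\bz(\tau)}$. The function $\bz$ satisfies the boundary conditions $\bz(0) = \bz(h) = \bzero$ and, since the affine part has vanishing second derivative, $\ddot\bz = \ddot\bx$.

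Next I would integrate $\ddot\bz = \ddot\bx$ twice, fixing the free constant $\dot\bz(0)$ from the condition $\bz(h) = \bzero$. This produces the Green-function representation $\bz(\tau) = \int_0^h G(\tau,u)\,\ddot\bx(u)\,du$, with the scalar kernel $G(\tau,u) = u(\tau - h)/h$ for $u \le \tau$ and $G(\tau,u) = \tau(u - h)/h$ for $u > \tau$. The crucial point is that the triangle inequality $\norm{\int_0^h G(\tau,u)\,\ddot\bx(u)\,du} \le \int_0^h |G(\tau,u)|\,\norm{\ddot\bx(u)}\,du$ holds for \emph{every} norm on $\RR^d$, being a limit of the finite triangle inequality over Riemann sums. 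Bounding $\norm{\ddot\bx(u)}$ by $\norm{\ddot\bx}_{C[0,h]}$ then leaves only a scalar integral to evaluate.

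Finally I would compute $\int_0^h |G(\tau,u)|\,du = \tfrac12\,\tau(h-\tau)$, whose maximum over $\tau \in [0,h]$ is attained at $\tau = h/2$ and equals $h^2/8$. This yields $\norm{\bz(\tau)} \le \frac{h^2}{8}\norm{\ddot\bx}_{C[0,h]}$ and closes the argument. The step I expect to be the real obstacle is precisely the arbitrariness of the norm: the familiar Euclidean curvature proofs do not transfer, so the whole estimate must be channelled through the norm-independent integral triangle inequality and the explicit kernel, with the constant $1/8$ emerging from the maximum of the scalar integral rather than from any geometric picture.
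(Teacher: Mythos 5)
Your proof is correct, and it takes a genuinely different route from the paper's. The paper argues geometrically via duality: it picks the point $\bx(\xi)$ of the arc farthest from the chord, invokes the convex separation theorem to produce a supporting functional $\bp$ with $\norm{\bp}^* = 1$ that realizes the distance $r = (\bp, \bx(\xi))$, and then applies a one-variable Taylor expansion to $f(t) = (\bp, \bx(t))$ at the interior maximum $\xi$ (where $f'(\xi) = 0$), with the harmless normalization $\xi \le h/2$ obtained by swapping the endpoints; the constant comes from $r = -\tfrac12 f''(\eta)\xi^2 \le \tfrac12 \norm{\ddot\bx}\,\xi^2 \le \tfrac{h^2}{8}\norm{\ddot\bx}$. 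You instead subtract the linearly parametrized chord, solve the resulting two-point boundary problem $\ddot\bz = \ddot\bx$, $\bz(0) = \bz(h) = \bzero$ by the explicit Green kernel, and push the norm through the integral via the Riemann-sum triangle inequality $\norm{\int_0^h G(\tau,u)\,\ddot\bx(u)\,du} \le \int_0^h \abs{G(\tau,u)}\,\norm{\ddot\bx(u)}\,du$, which indeed holds for every norm on $\RR^d$ (equivalently, by testing against dual functionals, which is where your argument and the paper's secretly meet). Your kernel and its mass check out: integrating twice and fixing $\dot\bz(0)$ from $\bz(h)=\bzero$ gives exactly your $G$, and $\int_0^h \abs{G(\tau,u)}\,du = \tfrac12\,\tau(h-\tau) \le \tfrac{h^2}{8}$. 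Each approach buys something: the paper's proof is shorter and needs no explicit representation formula, only convexity and Taylor's theorem; yours avoids the choice of a farthest point and the endpoint-swapping case split, and it proves more, namely the pointwise bound $\tfrac12\,\tau(h-\tau)\,\norm{\ddot\bx}_{C[0,h]}$ for the distance at each individual $\tau$ --- and in fact the distance to the specific interpolated point $\bx(0) + \tfrac{\tau}{h}(\bx(h)-\bx(0))$, not merely to the segment --- which is the standard sharp linear-interpolation error estimate and is attained by a parabola, showing the constant $1/8$ cannot be improved in either proof.
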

\begin{proof}
Denote by~$\|\cdot\|^*$ the dual norm in~$\RR^d$, 
thus~$\|\by\|^* = \sup_{\|\bx\| = 1}(\by, \bx)$. Let~$\bx(0) = \nill$ and~$\bx(h) = \ba$. 
Let $\bx(\xi)$ be the most distant point of the arc~$\{\bx(\tau), \tau\in [0,h]\}$
to the segment~$[\nill,\ba]$ and let this maximal distance be equal to~$r$. 
Suppose  $\by$ is the closest to~$\bx(\xi)$
point of that segment and denote~$\bss = \bx(\xi) -  \by$. Thus, $\|\bss\| = r$. 
The segment~$[\nill,\ba]$ does not intersect the 
interior of the ball of radius~$r$ centred at~$\bx(\xi)$. 
Therefore, by the convex separation theorem, there exists 
a linear functional~$\bp \in \RR^d$, $\|\bp\|^*=1$,  which is non-positive 
on that segment, non-negative on the ball, and such that 
$(\bp, \bss) = \|\bss\|$. Since the point $\by$ belongs to the ball, it follows that 
$(\bp, \by) = 0$.  We have  $(\bp, \bss) = 
(\bp, \bx(\xi)) -  (\bp, \by)  = (\bp, \bx(\xi))$, therefore, 
$(\bp, \bx(\xi))  = r$, see Figure~2. 

Defining the 
function~$f(t)  = \bigl( \bp, \bx(t)\bigr)$ we obtain
\begin{equation}\label{eq.r}
f(\xi)  - f(0)  = \bigl( \bp ,  \bx(\xi) - \bx(0)\bigr)   = 
  \bigl( \bp, \bx(\xi) \bigr) = r.
\end{equation}
Without loss of generality we assume that~$\xi \le \frac{1}{2} h $, otherwise one can interchange 
the ends of the segment~$[0,h]$. The Taylor expansion 
of~$f$ at the point~$\xi$ gives~$f(t) = f(\xi)  +  f'(\xi)(t-\xi) +  
\frac{1}{2}  f'(\eta) (t-\xi)^2$, where $\eta \in [t, \xi]$. 
The maximum of~$f(t)$  is attained at~$t=\xi$, 
hence,~$f'(\xi) = 0$. For~$t = 0$, this yields  
$f(0) = f(\xi) +  \frac{1}{2}  f''(\eta) \xi^2$. 
Combining with~\eqref{eq.r}, we obtain 
\begin{equation*}
   r
   =
   - \frac{1}{2} f''(\eta) \xi^2
   =
   \frac{1}{2} \bigl(\bp ,  -\ddot \bx(\eta)\bigr) \xi^2
   \le 
   \frac{1}{2} \|\ddot \bx\| \xi^2
   \le  
   \frac{h^2}{8} \|\ddot \bx\|
   .
\end{equation*}
which completes the proof. 
\end{proof}

\begin{figure}
\centering\small
\begin{tikzpicture}
\tkzDefPoint(0,0){Zero}
\tkzDefPoint(2,1){xxi}
\tkzDefPoint(5,0){a}
\tkzDefPoint(2,0){y}
\draw[ultra thick] (Zero)  to[out=90,in=180] (xxi)  to[out=0,in=135] (a);
\draw (xxi) -- (y) node [midway, left] {$s$};
\draw[ultra thick, gray] plot coordinates {(xxi) (y)};
\draw[] (Zero) -- (a);
\draw[](xxi) circle (1);
\tkzLabelPoint[left, below](Zero){$\bx(0)=\bzero$}
\node at (Zero)[circle,fill,inner sep=1.5pt]{};
\tkzLabelPoint[below](y){$\by$}
\node at (y)[circle,fill,inner sep=1.5pt]{};
\tkzLabelPoint[above](xxi){$\bx(\xi)$}
\node at (xxi)[circle,fill,inner sep=1.5pt]{};
\tkzLabelPoint[below](a){$\bx(h)=\ba$}
\node at (a)[circle,fill,inner sep=1.5pt]{};
\draw (2.5,0) -- (2.75,-.5) node [right] {$\bp\leq0$};
\node at (2.5,0)[circle,fill,inner sep=.75pt]{};
\draw (1.5,1.5) -- (1,2) node [above] {$\bp\geq0$};
\node at (1.5,1.5)[circle,fill,inner sep=.75pt]{};
\draw[dotted ] (xxi) -- (2.8660,1.5);
\draw[->,dotted ] (3.7321,2) -- (2.8660,1.5);
\node at (4.4,1.7) {$r \leq \frac{h^2}{8}\|\ddot\bx\|$};
\end{tikzpicture}
\caption{Construction in proof of Lemma~\ref{l.30}.}
\label{fig:30}
\end{figure}
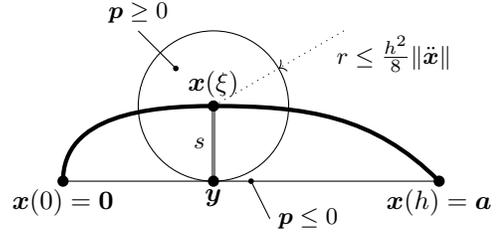

\begin{theorem}\label{th.10}
Let~$\bx(t)$ be a solution of the differential equation~$\dot \bx = A \bx$ with a constant 
$d\times d$ matrix~$A$, $\|\vardot\|$ be an arbitrary norm in~$\RR^d$, 
and $h \in \bigl( 0 ,  \sqrt{8/\|A^2\|} \bigr)$ be a number. 
Then for every~$\tau \in [0,h]$, we have
 \begin{equation}\label{eq.30}
 \bigl\|\bx(\tau) \bigr\|
 \le
 \frac{1}{1 -  \frac{h^2}{8} \|A^2\| }
  \max \Bigl\{ \|\bx(0)\| ,  \|\bx(h)\|   \Bigr\} . 
 \end{equation}
\end{theorem}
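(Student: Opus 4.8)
The plan is to apply Lemma~\ref{l.30} to the trajectory $\bx(t)$ on $[0,h]$ and convert the resulting absolute distance bound into the multiplicative bound~\eqref{eq.30}. First I would observe that since $\bx$ solves $\dot\bx = A\bx$, we have $\ddot\bx = A^2\bx$, so the second derivative is controlled by $\|\ddot\bx(t)\| = \|A^2\bx(t)\| \le \|A^2\|\,\|\bx(t)\|$ for each $t$. Thus Lemma~\ref{l.30} tells us that for every $\tau\in[0,h]$, the point $\bx(\tau)$ lies within distance $\frac{h^2}{8}\|A^2\|\,M$ of the segment $[\bx(0),\bx(h)]$, where $M = \max_{t\in[0,h]}\|\bx(t)\|$ is the maximum norm attained along the arc.

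The key step is to bootstrap this into control of $M$ itself. Let $\bx(\xi)$ be a point where the maximum $M$ is attained; by the triangle inequality, $\|\bx(\xi)\|$ is at most the distance from $\bx(\xi)$ to the segment plus the norm of the nearest point $\by$ on the segment. Since $\by$ is a convex combination of the endpoints, $\|\by\| \le \max\{\|\bx(0)\|,\|\bx(h)\|\}$ by convexity of the norm. Combining, I would get
\begin{equation*}
M = \|\bx(\xi)\| \le \max\{\|\bx(0)\|,\|\bx(h)\|\} + \frac{h^2}{8}\|A^2\|\,M.
\end{equation*}
The hypothesis $h < \sqrt{8/\|A^2\|}$ guarantees that $\frac{h^2}{8}\|A^2\| < 1$, so this inequality can be rearranged to solve for $M$, yielding
\begin{equation*}
M \le \frac{1}{1 - \frac{h^2}{8}\|A^2\|}\max\{\|\bx(0)\|,\|\bx(h)\|\}.
\end{equation*}
Since $\|\bx(\tau)\| \le M$ for every $\tau$, this gives exactly~\eqref{eq.30}.

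The main obstacle I anticipate is the self-referential nature of the curvature bound: Lemma~\ref{l.30} bounds the distance in terms of $\|\ddot\bx\|_{C[0,h]}$, but $\ddot\bx = A^2\bx$ depends on the very norms we are trying to control, so the constant $M$ appears on both sides. The resolution is precisely the fixed-point/absorption argument above, which is where the condition $h \in (0,\sqrt{8/\|A^2\|})$ becomes essential --- it ensures the coefficient in front of $M$ is strictly less than one so the absorption is legitimate. A minor technical point to verify is that the maximum $M$ is indeed attained (so that $\xi$ exists), which follows from continuity of $\bx$ and $\|\vardot\|$ on the compact interval $[0,h]$. One should also confirm that the nearest-point projection $\by$ onto the segment is well-defined and lies in the segment; this is standard, and convexity of the norm then delivers $\|\by\| \le \max\{\|\bx(0)\|,\|\bx(h)\|\}$ directly.
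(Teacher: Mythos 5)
Your proposal is correct and follows essentially the same route as the paper's own proof: evaluate at the point of maximal norm, bound $\|\ddot\bx\|_{C[0,h]}\le\|A^2\|\,M$, apply Lemma~\ref{l.30} together with convexity of the norm to get $\|\by\|\le\max\{\|\bx(0)\|,\|\bx(h)\|\}$, and absorb $M$ using the hypothesis $\frac{h^2}{8}\|A^2\|<1$. The self-referential ``bootstrap'' you flag as the main obstacle is precisely the mechanism the paper uses, so there is nothing to add.
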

\begin{proof}
It suffices to consider the vector~$\bx(\tau)$
with the maximal norm over all~$\tau \in [0,h]$. 
Since  $\ddot \bx = A\dot \bx  = A^2\bx$, it follows that
$\|\ddot \bx\|_{C[0,h]}  = \max_{t\in [0,h]}\|A^2\bx(t)\|  \le  
\|A^2\|\cdot \|\bx(\tau)\|$. Hence, by Lemma~\ref{l.30}, 
the distance from the point~$\bx(\tau)$ to the closest point~$\by$ of the segment~$[\bx(0), \bx(h)]$
does not exceed~$\frac{h^2}{8} \|A^2\|\cdot \|\bx(\tau)\|$. 
On the other hand, this distance is not less than 
$\|\bx(\tau)\| -  \|\by\|$.
It remains to note that $\|\by\|  \le  \max \{\|\bx(0)\|, \|\bx(h)\|\}$, 
which follows from the convexity of the norm. 
Thus,
\begin{equation*}
\frac{h^2}{8} \|A^2\|\cdot \|\bx(\tau)\| \ge 
\|\bx(\tau)\| -   \max \Bigl\{\|\bx(0)\| ,  \|\bx(h)\| \Bigr\} .  
\end{equation*}
Expressing~$\|\bx(\tau)\|$ we arrive at~\eqref{eq.30}.
\end{proof}

\begin{proof}[Proof of Theorem~\ref{th.20}]
The lower bound follows trivially. 
 To prove the upper bound, 
we first assume that~$\hat \rho(\cA_h) = 1$.
By Theorem~A, the system~$\cA_h$ possesses an extremal  multinorm~$\|\vardot \| = \{\|\vardot \|_j\}_{j=1}^{n}$. 
For each~$j\le n$, we denote~$\alpha_j  = - \frac{1}{m} \ln    \left(1 -  \frac{\|A_j^2\|_j}{8} h^2\right) $. 
  Let us show that 
for every~$i\ne j$ and for every point~$\bz_0 \in V_i$, the 
 trajectory~$\bz(t)$ generated in~$V_j$ by the ODE $\dot \bz = A_j \bz$  and starting at~$\bz_0$
 possesses the property:~$\|\bz(t)\|_j  \le  e^{\alpha_jt} \|\bz_0\|_j$ for all $t \ge m$. 
Since the multinorm is extremal, it follows that for every integer~$k\ge 0$, one has
$
\bigl\| \bz(m+kh)\bigr\|_j
=
\bigl\|e^{mA_j} \bigl( e^{hA_j} \bigr)^k  \bz_0 \bigr\|_j 
\le
\|\bz_0\|_j
$. 
  Let~$k$ be the maximal integer such that $ t  \ge  m +  kh$.
  Thus, $t=m+kh+\tau$, $\tau \in [0,h)$. Consider the arc of the 
  trajectory~$\bz(\vardot)$ on the time interval~$[m+kh, m+(k+1)h]$
  and denote~$\bx(0) = \bz(m+kh)$, $\bx(h)  = \bz (m+kh+h)$. 
Observe that both~$\|\bx(0)\|$ and $\|\bx(h)\|$ do not exceed~$\|\bz_0\|$.    
  Applying Theorem~\ref{th.10}
   we obtain
\begin{align*}
\|\bz(t)\|_j & = \|\bx(\tau)\|_j
 \le 
\frac{1}{1 -  \frac{h^2}{8} \|A_j^2\|_j } \|\bz_0\|_j \\
& = 
e^{\alpha_j m} \|\bz_0\|_j  \le 
e^{\alpha_j t} \|\bz_0\|_j.
\end{align*}

Using the multinorm notation and setting $\alpha  = \max_{j=1}^n \alpha_j$, 
we conclude that for every trajectory~$\bz(\vardot)$ generated by one regime it holds that  
$\|\bz(t)\|  \le   e^{\alpha  t} \|\bz(0)\|$ for all~$t \ge m$. 

Consider now an arbitrary trajectory~$\bx(\vardot)$ of the system~$\cA$. 
If it does not have switches, then it is generated by some regime~$A_j$, 
and the proof follows immediately from the inequality above. Let it have the switching points $t_0 < t_1 < \ldots $.  This set can be infinite or finite.
Applying the inequality above to arbitrary~$ t \in [t_k, t_{k+1}]$
and denoting~$\bz(0) = \bx(t_k)$, we obtain~$\|\bx(t)\|   \le  
e^{\alpha (t - t_k)}\|\bx(t_k)\|$. Now take a time interval 
$[0,T]$ and let $t_N$ be the largest switching point on it. 
Applying our inequality successively for all switching intervals, we get 
\begin{align*}
\|\bx(T)\|
& \le
e^{\alpha(t-t_N)} 
\prod_{k=0}^{N-1} e^{\alpha (t_{k+1} - t_k)}  \|\bx(t_0)\|  \\
& \le 
e^{\alpha (T - t_0)}  \|\bx(t_0)\| . 
\end{align*}
Thus, for every trajectory~$\bx(\vardot)$, we have 
 $\|\bx(T)\|  =  \bigO( e^{\alpha t})$ as $T \to \infty$. 
 Let us remember that the norm~$\|\vardot\|$ can be different on different switching intervals. 
 Nevertheless, they all belong to the finite set of norms~$\{\|\vardot\|_j\}_{j=1}^n$
 which are all equivalent. Therefore, 
 $\sigma(\cA) \le \alpha$.  
 
 This concludes the proof for the case~$\hat \rho = 1$. 
 The general case follows from  this one by normalization: 
 We replace the family~$\cA$ by $\tilde \cA  = \cA - \sigma_h I$. 
 Then $\hat \rho(e^{h\tilde \cA}) = 1$ and $\sigma(\tilde \cA) = \sigma(\cA) - \sigma_h$. 
 Finally, applying  the theorem  for the system~$\tilde \cA$
 and substituting to~\eqref{eq.40}, we complete the proof. 
\end{proof}

Now, to put  Theorem~\ref{th.20} into practise, we need to 
compute the value~$\hat \rho (\cA_h)$  and construct an extremal multinorm. 
We will do it in Section~5 for a general system on a graph.

\section{Computing the joint spectral radius and an extremal multinorm for a system on a graph}
\label{sec:alg}

Theorem~\ref{th.20} gives a recipe to compute the Lyapunov exponent 
of a linear switching system~\eqref{eq.main} with sufficiently high accuracy.
Due to the quadratic dependence of~$h$ in~\eqref{eq.50}
one can make the distance between the upper and lower bounds small by choosing an
appropriate discretization step~$h$.
This plan requires solving two problems:
(i) Compute the value of~$\hat \rho(\cA_h)$.
(ii) Construct an extremal multinorm for~$\cA_h$. 
Both are solved simultaneously by the invariant polytope algorithm \emph{(ipa)} derived in~\cite{GP13}.
The ipa computes the joint spectral radius of several matrices by constructing an extremal norm.
In~\cite{CGP18} the invariant polytope algorithm was extended to
discrete time systems with restrictions and to
systems of graphs.
We present the main idea of the algorithm in this section;
for details see~\cite{GP13,M20,CGP18,KP23}.
The reference implementation of the algorithm can be found at~%
\href{https://gitlab.com/tommsch/ttoolboxes}{\emph{gitlab.com/tommsch/ttoolboxes}}.

The (Markovian) invariant polytope algorithm~(\emph{ipa}) finds the joint spectral radius~$\hat \rho (\cA_h)$ and an 
extremal multinorm. It consists of two steps. 
 
 \paragraph{Step 1.}
 We fix a number~$N$ (not very large) and exhaust all cycles
 $v_{i_0}\to v_{i_1} \to \cdots \to v_{i_L}= v_{i_0}$ of~$G$ of length at most~$N$. To each cycle we denote by~$\Pi  = E_{i_{L}i_{L-1}} \cdots E_{i_1i_0}$ the product of the linear operators 
 associated to 
 its edges and by~$T  = \sum_{k=1}^{L}h_{i_{k}i_{k-1}}$
 its total time. Recall that $E_{jj}  = 
 e^{hA_j}$, $h_{jj} = h$ for all~$j$ and $E_{ji}  = e^{mA_j}$ , $h_{ji} = m$  for
all pairs $(i, j)$, $i \ne j$.

 We choose a cycle with the 
 biggest value~$r = \rho(\Pi)^{1/T}$ and call it \emph{leading cycle}
 and respectively~\emph{leading product}~$\Pi$. We set~$\tilde A_j  = A_j -  (\ln r) I$, $j=1, \ldots , n$.
 For the new system~$\tilde \cA_h$, we have 
 $\rho(\tilde \Pi)  = \rho( \tilde E_{i_{L}i_{L-1}} \cdots \tilde E_{i_1i_0}) = 1$. 

In many cases to find the leading cycle
we can avoid the exhaustion and use the auxiliary Algorithm~\ref{alg.10}. It is an adaptation of the modified Gripenberg algorithm from~\cite{M20} to our setting. Some of its numerical properties are assessed in Appendix~A. 
 
For the sake of simplicity of exposition,  we  assume 
that the leading eigenvector of~$\tilde \Pi$ is positive and thus equal to one. The general case is considered similarly, see~\cite{CGP18, MP23}.   
 
Denoting by $\bx_0$ the  leading eigenvector of~$\tilde \Pi$, it follows 
that~$\bx_0 \to \bx_1 \to \cdots \to 
 \bx_{L-1}\to \bx_0$ is the periodic trajectory corresponding to that cycle. 
 
\paragraph{Step 2.}
We try to prove that actually~$\hat \rho( \cA_h) = r$ and, if so, to find an extremal norm. 
 
For every~$j$, we denote by~$\cV_j^{(0)}$
the set of points~$\bx(s)$, $s = 0, \ldots , L-1$ that belong to~$V_j$. 
If there are no such points, then~$\cV_j^{(0)} = \emptyset$. 
Suppose after $k$ iterations we have finite sets~$\cV_j^{(k)} \subset \allowbreak V_j$, $j = 1, \ldots , n$.
Denote by~$\co_s{\cV_j^{(j)}}$
the convex hull of the set~$\cV_j^{(k)} \cup (-\cV_j^{(k)})$.
Now for every $j$, we add to~$\cV_j^{(k)}$ all points of the sets 
$e^{m\tilde A_j}\cV_s^{(k)}$ for all~$ s \ne j$
and of the set $e^{\,h\tilde A_j}\cV_j^{(k)}$.
We add only those points that do not belong to~$\co_s{\cV_j^{(j)}}$,
the others are redundant and we discard them.
This way we obtain the sets~$\cV_j^{(k+1)}$, $j =1, \ldots , n$.  
We do this until~$\cV_j^{(k+1)} = \cV_j^{(k)}$ for all~$j$, in which case 
the algorithm terminates.  We conclude that~$\hat \rho(\cA_h) = \rho(\Pi)^{1/T}$ and the Minkowski norms 
of the polytopes~$\{\co_s{\cV_j^{(j)}}\}_{j=1}^n$ form an extremal multinorm for~$\cA_h$.

\begin{remark}{\em
To find the Lyapunov exponent~$\sigma (\cA)$, we need to compute the operator norms $\|A_j - \sigma_h I\|_j$.
Since the unit balls of the norms $\norm{\vardot}_j$ are polytopes,
this can be done efficiently by solving an LP problem~\cite{GP13}.

To achieve a good precision one needs to choose an appropriate step size $h$ which, however,
cannot be too small. Otherwise, the matrices~$e^{hA_j}$ will be close to the
identity complicating the computation of the joint spectral radius~\cite{GLP17}.
In particular, the length of the leading cycle may get too large to be handled efficiently 
or cannot be found at all, see Example~\ref{ex:h1}.
In most cases $h$ cannot  be chosen less than~$0.1$ (as from our numerical tests).
}
\end{remark}

\subsection{Positive systems}
A continuous-time linear switching system
  is called \emph{positive} if every trajectory~$\bx(t)$  
  starting in the positive orthant~$\RR^d_+$  remains  in~$\RR^d_+$  for all~$t$. 
  Positive systems have been studied widely in literature 
  due to many applications. 
  
The positivity of the system is equivalent to that all the matrices~$A_j$ are Metzler,
i.e.\ all off-diagonal entries are nonnegative.
In this case all the matrices~$e^{\,hA_j}, e^{\,mA_j}$
of the discrete-time  system~$\cA_h$ are nonnegative. 
Hence, the matrix~$\Pi$ in the invariant polytope algorithm is also nonnegative, 
and therefore, the Perron-Frobenius theorem implies 
the nonnegativity of the leading eigenvector~$\bx_0$. 
Consequently,  all the sets~$\cV^{(k)}_j$ are nonnegative, i.e.,  
the algorithm runs entirely in~$\RR^d_+$. It is then possible to replace the 
polytopes~$\co_s{\cV_j^{(j)}}$ by the 
positive polytopes~$\co_+{\cV_j^{(j)}} = \{\bx \ge 0 : \bx \le \co_s{\cV_j^{(j)}}\}$, where the inequalities are
understood element wise.

Since the positive polytopes~$Q_j^{(k)}$ are in general much larger than~$P_j^{(k)}$, 
they absorb more points in each iteration.  This reduces significantly the number of vertices of the polytopes and, respectively, the complexity of each iteration.
In fact, this modification of the invariant polytope algorithm  for positive systems works very efficiently even  for large dimensions~$d$.

\section{Numerical results}
\label{sec:num}
We demonstrate the performance of estimate~\eqref{eq.40} from Theorem~\ref{th.20} 
to the computation of the Lyapunov exponent. 
 Given pairs of random matrices and random dwell time $m \in (0,1)$,
Table~\ref{table:metzler1} presents the performance of the Markovian ipa.
For each dimension $d$, we conducted about 15 tests
and we give the median of the best achieved accuracies
i.e.\ the maximal  lower bound from Equation~\eqref{eq.50}.
As test matrices we used $(lhs)$ matrices with normally distributed entries, and
$(rhs)$ Metzler matrices with random integer entries in $[-9, 9]$. To make the examples 
more interesting, we omit simple cases when one matrix dominates others (which often occurs). 
To this end, the matrices got normalized  such that the 2-norm is equal to~$1$.

One can see that the Markovian ipa can compute bounds in reasonable time
(although the needed time depends very strongly on the matrices)
up to dimension $\approx10$ for general matrices, and
up to dimension $\approx500$ for Metzler matrices.

\begin{remark}{\em 
The scripts used to obtain the experimental results can be found at
\href{https://gitlab.com/tommsch/ttoolboxes/-/tree/master/demo/dwelltime}{\emph{gitlab.com\-/tommsch\-/ttool\-boxes/-/tree/master/demo/dwelltime}}.
}
\end{remark}

\begin{table}
\centering
\caption[]{Computation of the Lyapunov exponent for arbitrary systems and for positive systems }
\label{table:metzler1}
\begin{tabular}{rrr}
\multicolumn{3}{c}{Random  matrices} \\ \hline
\multicolumn{1}{c}{dim} &  \multicolumn{1}{c}{$\text{ub}-\text{lb}$} & \multicolumn{1}{c}{time}\Bstrut   \\
\hline
$   2 $ & $  0.016512 $ & $    4s $\Tstrut\\  
$   3 $ & $  0.020846 $ & $   58s $ \\  
$   4 $ & $  0.036887 $ & $  780s $ \\  
$   5 $ & $  0.108899 $ & $ 2300s $ \\  
$   6 $ & $  0.133852 $ & $ 4000s $ \\  
$   7 $ & $  0.234899 $ & $ 3300s $ \\  
$   8 $ & $  0.314734 $ & $ 4900s $ \\  
$   9 $ & $  0.409434 $ & $ 3500s $ \\  
\end{tabular}
~
\begin{tabular}{rrr}
\multicolumn{3}{c}{Random Metzler matrices} \\ \hline
\multicolumn{1}{c}{dim} &  \multicolumn{1}{c}{$\text{ub}-\text{lb}$} & \multicolumn{1}{c}{time}\Bstrut  \\
\hline
$   2 $ & $  0.018851 $ & $   1s $\Tstrut\\
$   5 $ & $  0.008385 $ & $   1s $ \\
$  13 $ & $  0.007760 $ & $   2s $ \\
$  34 $ & $  0.007225 $ & $   4s $ \\
$  89 $ & $  0.009330 $ & $  14s $ \\
$ 144 $ & $  0.006306 $ & $  41s $ \\
$ 233 $ & $  0.005352 $ & $ 150s $ \\
$ 377 $ & $  0.005544 $ & $ 560s $
\end{tabular}
\end{table}

\subsection{Examples}
\label{sec:ex}

\begin{figure}
\centering
\includegraphics[width=.98\linewidth]{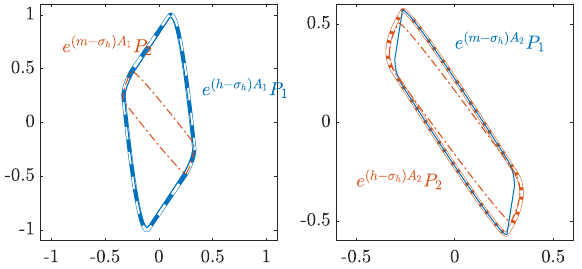}
\caption[]{%
Polytopes from Example~\ref{ex:1}.}
\label{fig:ipaplot}
\end{figure}

We begin with a simple two-dimensional example illustrating 
the Lyapunov exponent computation by the estimates of Theorem~\ref{th.20} 
and the Markovian ipa (Section~5).
\begin{example}\label{ex:1}
Given
dwell time $m = 1$,
two matrices
\begin{equation*}
A_1 =\frac{1}{\sqrt{2} + 2}\begin{bmatrix}0&0\\1&0\end{bmatrix},\quad 
A_2 =\frac{1}{\sqrt{2} + 2}\begin{bmatrix}-2&-2\\-1&-2\end{bmatrix},
\end{equation*}
and discretization step $h=0.2$.
The product 
$\Pi= \allowbreak
(e^{mA_2})^{  1} \allowbreak (e^{hA_2})^{  7} \allowbreak
(e^{mA_1})^{  1} \allowbreak (e^{hA_1})^{180} \allowbreak
(e^{mA_2})^{  1} \allowbreak (e^{hA_2})^{  7} \allowbreak
(e^{mA_1})^{  1} \allowbreak (e^{hA_1})^{181} \allowbreak
(e^{mA_2})^{  1} \allowbreak (e^{hA_2})^{  7} \allowbreak
(e^{mA_1})^{  1} \allowbreak (e^{hA_1})^{183}$
is a leading cycle.
The ipa gives  $\hat\rho(\cA_h)  = \allowbreak  1.0331...$,
or respectively $\sigma_h  = \allowbreak 0.0325...$,
and polytopes $P_1,\, \allowbreak P_2\subseteq \allowbreak \RR^2$ such that
$e^{(m-\sigma_h)A_1}P_2\subseteq \allowbreak P_1$,
$e^{(h-\sigma_h)A_1}P_1\subseteq \allowbreak P_1$,
$e^{(h-\sigma_h)A_2}P_2\subseteq \allowbreak P_2$, and
$e^{(m-\sigma_h)A_2}P_1\subseteq \allowbreak P_2$.

The relevant norms compute to
$\| (A_1 - \sigma_h I)^2 \|_{P_1}  = \allowbreak 0.0083...$, and
$\| (A_2 - \sigma_h I)^2 \|_{P_2}  = \allowbreak 2.8361...$.
Summing up we obtain the bounds
$0.0325 < \allowbreak \sigma(\cA) < \allowbreak 0.0469$.

In Figure~\ref{fig:ipaplot} the polytopes 
$P_1$ (blue-thick-dashed line) and 
$P_2$ (red-thick-dot-dashed line) are plotted,
as well the images under the operators $e^{(m-\sigma_h)A_1}$, \dots (thin-blue line and thin-red-dot-dashed line).
\end{example}

\begin{example}\label{ex:h1}
Let
\begin{equation*}
A_1 \!=\! \begin{bmatrix}
    -1  & -1  & \m1 & -1 \\
    \m1 & -1  & -1  & -1 \\
    \m1 & \m1 & -1  & -1 \\
    \m1 & -1  & \m1 & -1
\end{bmatrix},
A_2 \!=\! \begin{bmatrix}
    -1  & -1  & -1  & -1  \\
    \m1 & -1  & \m1 & \m1 \\
    -1  & \m1 & -1  & -1  \\
    \m1 & -1  & \m1 & \m1
\end{bmatrix},
\end{equation*}
and be the dwell time $m = 0.5$.

In Table~\ref{table:h1} 
we report 
the discretization length \emph{($h$)},
the lower bound $\sigma_h$ of $\sigma(\cA)$ given in~\eqref{eq.40} \emph{(lb)},
the upper bound $\sigma_h - \frac{1}{m} \ln \left(1 - \frac{\|(\cA - \sigma_h I)^2\|}{8} h^2 \right)$ of $\sigma(\cA)$ \emph{(ub)},
and a
\emph{leading cycle}, which is  a periodic switching law 
of the discretized system that produces the fastest growth of trajectories. 
The leading cycle is denoted as~$(a^1_1;\, a^2_1; \ldots;\, a^1_k;\, a^2_k)$, 
where~$a^1_{\vardot}$ and~$a^2_{\vardot}$ are the time of actions of the modes~$A_1$ and~$A_2$ respectively.
For example, the leading cycle $(1.3;\, 1.7)$ in the first line 
of  Table~\ref{table:h1} (the case~$h=0.4$) is the mode~$A_2$ acting the time~$1.3$ 
followed by the time~$1.7$ of the mode~$A_1$.  From the table we see that the leading cycle 
seems to stabilize around $(1.3;\, 1.7)$ as $h$ increases.

The character \emph{``?''} denotes that no leading cycle could be found.
On can see, that for too small values of $h$, the leading cycle cannot be determined,
and thus, the lower bound becomes meaningless.
Nevertheless,
small discretization steps may still give good upper bounds.
\end{example}

\begin{table}
\centering
\caption[]{%
Results for Example~\ref{ex:h1}: the bounds for the Lyapunov exponent and 
the leading cycles depending on~$h$.
}
\label{table:h1}
\begin{tabular}{rrrl}
\multicolumn{1}{c}{$h$} &  \multicolumn{1}{c}{lb}  &         \multicolumn{1}{c}{ub}   & \multicolumn{1}{l}{leading cycle}\Bstrut\\ 
\hline
$ 0.400 $&$   0.0762 $&$  13.2624 $&$ (1.30;\, 1.70)  $\Tstrut\\ 
$ 0.330 $&$   0.0698 $&$   8.6828 $&$ (1.16;\, 1.82;\, 1.49;\, 1.49)  $ \\
$ 0.300 $&$   0.0751 $&$   7.1247 $&$ (1.40;\, 1.70) $ \\
$ 0.250 $&$   0.0742 $&$   4.7571 $&$ (1.25;\, 1.75) $ \\
$ 0.200 $&$   0.0762 $&$   3.0066 $&$ (1.30;\, 1.70) $ \\ 
$ 0.125 $&$   0.0762 $&$   1.1888 $&$ (1.25; \, 1.625)  $ \\ 
$ 0.100 $&$   0.0000 $&$   0.7298 $&$ ? $ \\ 
$ 0.050 $&$   0.0000 $&$   0.3659 $&$ ? $ \\ 
$ 0.040 $&$   0.0000 $&$   0.3796 $&$ ? $ \\ 
$ 0.025 $&$   0.0000 $&$   0.3125 $&$ ? $
\end{tabular}
\end{table}

\appendix

\subsection{Markovian modified Gripenberg algorithm}

The Markovian modified Gripenberg algorithm, is an adaption of the modified Gripenberg algorithm~\cite{M20}.
The main difference is that in each iteration, still, all possible products have to be considered,
but just admissible cycles (w.r.t. to the graph) are used to compute the intermediate bounds.

\begin{algorithm}[Markovian modified Gripenberg Algorithm]\label{alg.10}
The algorithm searches for a leading cycle $\Pi_{\text{max}}$ of a graph $G$ and linear operators $\mathcal{E}$.
\begin{flalign*}
& \rho_{\text{max}} = 0, \quad \Pi_0 = I & \\
& \mathbf{for } \ k = 1,\ldots, K & \\
& \qquad \cE_k = \Pi_{k-1} \times \cE  \quad \text{\footnotesize // all possible products in iteration $k$} & \\
& \qquad \rho_k = \max\{ \rho(E)^{1/T(E)} : E \in \cE_k \text{ and $E$ is a cycle} \}& \\
& \qquad \qquad \text{\footnotesize // where $T(E)$ is the time of action of $E$} & \\
& \qquad \Pi_k = \{ E \in \cE_k :\rho(E)^{1/T(E)} = \rho_k \} & \\
& \qquad \mathbf{if }\ \rho_k > \rho_{\text{max}}\ \mathbf{ then } & \\
& \qquad \qquad  \Pi_{\text{max}} = \Pi_k, \quad \rho_{\text{max}} = \rho_k & \\
& \qquad \mathbf{else} & \\
& \qquad \qquad \Pi_{\text{max}} = \Pi_k \cup \Pi_{\text{max}} & \\
& \qquad \Pi_k = \Pi_k^+ \cup \Pi_k^- = \{ E \in \cE_k : \norm{E}^{1/T(E)} \geq \rho_k^+ \} \ \cup  \\ 
& \qquad \qquad \qquad \{ E \in \cE_k : \norm{M}^{1/T(M)} \leq \rho_k^- \} & \\
& \qquad \qquad \text{\footnotesize // where $\rho_k^\pm \geq \rho_{\text{max}}$ and $\rho_k^\pm$ are such that $\abs{\Pi_k^\pm} = N/2$ } & \\
& \mathbf{return } \ \Pi_{\text{max}}, \rho_{\text{max}}
\end{flalign*}
\end{algorithm}

\begin{example}\label{ex:modgrip}
Given pairs of matrices of various dimensions,
random dwell time $m \in (0,1)$ and random discretization step $h\in(0,m)$.
Table~\ref{table:modgrip} presents the performance of the Markovian modified Gripenberg algorithm \emph{(mg)}
compared to a brute force method \emph{(bf)}.
For each dimension $d$ we conducted 15 tests and we report in how many cases 
the Gripenberg like algorithm, or the brute force algorithm worked better
Furthermore we give the average runtime of the algorithm
(Note though that the Gripenberg like algorithm in general found the final result after approximately $2s$).

As test matrices we used $(a)$ matrices with random normally distributed entries, and
$(b)$ Metzler matrices with random integer entries in $[-9, 9]$,
always normalized such that the \mbox{2-norm} equals~1.

One can see that the Gripenberg like algorithm performs in general faster and better than a brute force method.
\end{example}

\begin{table}
\centering
\caption[]{Results for Example~\ref{ex:modgrip}: Assessment of Markovian modified Gripenberg algorithm}
\label{table:modgrip}
\begin{tabular}{ccccc}
\multicolumn{5}{c}{ $(a)$ Random Gaussian matrices} \\ \hline
$dim$ & mg better  & bf better  & $t_{mg}$   & $t_{bf}$  \Bstrut \\
\hline
$  2  $ & $              33\% $ & $    \phantom{0}7\% $ & $ 2.5s $ & $ 13.6s $\Tstrut\\
$  3  $ & $              20\% $ & $    \phantom{0}0\% $ & $ 2.5s $ & $ 12.7s $\\
$  4  $ & $              33\% $ & $    \phantom{0}7\% $ & $ 2.4s $ & $ 16.4s $\\
$  5  $ & $              13\% $ & $              13\% $ & $ 2.6s $ & $ 16.1s $\\
$  7  $ & $              13\% $ & $    \phantom{0}0\% $ & $ 2.6s $ & $ 12.9s $\\
$  8  $ & $              20\% $ & $    \phantom{0}0\% $ & $ 2.6s $ & $ 15.7s $\\
$ 11  $ & $              20\% $ & $    \phantom{0}0\% $ & $ 2.6s $ & $ 13.3s $\\
$ 13  $ & $    \phantom{0}0\% $ & $    \phantom{0}0\% $ & $ 2.6s $ & $ 15.9s $\\
$ 17  $ & $    \phantom{0}0\% $ & $    \phantom{0}0\% $ & $ 2.6s $ & $ 14.0s $\\
$ 21  $ & $              13\% $ & $    \phantom{0}0\% $ & $ 2.6s $ & $ 17.2s $
\end{tabular}
~\\[1em]\noindent
\begin{tabular}{ccccc}
\multicolumn{5}{c}{$(b)$ Random Metzler matrices} \\ \hline
$dim$ & mg better  & bf better  & $t_{mg}$   & $t_{bf}$ \Bstrut \\
\hline
$  2  $ & $    \phantom{0}0\% $ & $    0\% $ & $ 4.2s $ & $ 27.3s $ \Tstrut\\
$  3  $ & $    \phantom{0}0\% $ & $    0\% $ & $ 2.6s $ & $ 27.5s $\\
$  4  $ & $              40\% $ & $    0\% $ & $ 2.6s $ & $ 18.6s $\\
$  5  $ & $    \phantom{0}0\% $ & $    0\% $ & $ 2.6s $ & $ 26.9s $\\
$  7  $ & $              20\% $ & $    0\% $ & $ 2.6s $ & $ 17.6s $\\
$  8  $ & $              40\% $ & $    0\% $ & $ 2.5s $ & $ 17.2s $\\
$ 11  $ & $              20\% $ & $    0\% $ & $ 2.6s $ & $ 13.5s $\\
$ 13  $ & $    \phantom{0}0\% $ & $    0\% $ & $ 2.6s $ & $ 13.4s $\\
$ 17  $ & $              40\% $ & $    0\% $ & $ 2.5s $ & $ 13.7s $\\
$ 21  $ & $              20\% $ & $    0\% $ & $ 2.6s $ & $ 14.2s $
\end{tabular}
\end{table}
 
\section*{Acknowledgment}

\section*{References}

%
%

\end{document}